\numberwithin{equation}{section}
\newtheorem{theo}{Theorem}[section]
\newtheorem{prop}[theo]{Proposition}
\newtheorem{lemm}[theo]{Lemma}
\newtheorem{cor}[theo]{Corollary}
\newtheorem{claim}[theo]{Claim}
\theoremstyle{definition}
\newtheorem{defi}[theo]{Definition}
\theoremstyle{remark}
\newtheorem{rem}[theo]{Remark}
\theoremstyle{definition}
\newcommand{\vol} {\mathrm{vol_{\it{X} | \it{V} }}}
\newcommand{\vo} {\mathrm{vol_{\it{X} | \it{V^{'}} }}}
\newcommand{\voll} {\mathrm{vol_{\widetilde{ \it{X}} |\it{\widetilde V} }}}
\newcommand{\T} {({T\vert _{V_{\reg}}})_{\ac}^d} 
\newcommand{\Tm}{ ((\mu ^{*}T)\vert _{\widetilde V})_{\ac}^{d}} 
\newcommand{\e}{\varepsilon }
\newcommand{\del}{\delta }
\newcommand{\Vr}{V_{\mathrm{reg}}}
\newcommand{\OX} {\mathcal{O}_{X}}
\newcommand{\OV} {\mathcal{O}_{V}}
\newcommand{\OW} {\mathcal{O}_{W}}
\newcommand{\ac}{\mathrm{ac}}
\newcommand{\sing}{\mathrm{sing}}
\newcommand{\A}{\alpha}
\newcommand{\B}{\beta }
\newcommand{\ddbar}{dd^c}
\newcommand{\reg}{ \mathrm{reg} }
\newcommand{\V}[1] {\mathrm{vol_{ \it{X} | \it{#1} }}}
\newcommand{\Vm}[1] {\mathrm{vol_{ \it{M} | \it{#1} }}}
\newcommand{\I}{\mathcal{I}}
\newcommand{\J}{\mathcal{J}}
\newcommand{\CC}{\mathbb{C}}
\newcommand{\RR}{\mathbb{R}}
\newcommand{\QQ}{\mathbb{Q}}
\newcommand{\ZZ}{\mathbb{Z}}
\begin{document}

\title[RESTRICTED VOLUMES AND DIVISORIAL ZARISKI DECOMPOSITIONS]
{RESTRICTED VOLUMES AND\\ DIVISORIAL ZARISKI DECOMPOSITIONS}
\author{Shin-ichi MATSUMURA}

\address{Graduate School of Mathematical Sciences, University of Tokyo, 3-8-1 Komaba,
Tokyo, 153-8914, Japan.}

 \email{{\tt
shinichi@ms.u-tokyo.ac.jp, 
shinichi@sci.kagoshima-u.ac.jp, mshinichi0@gmail.com
}}

\thanks{Research of the author supported in part 
by JSPS 
the Grant-in-Aid for Scientific Research (KAKENHI No. 23-7228). }

\keywords{}

\maketitle

\begin{abstract}
We give a relation between the existence of a Zariski decomposition and 
the behavior of the restricted volume of a big divisor on a smooth (complex) projective variety. 
Moreover, we give an analytic description of the restricted volume 
in the line of Boucksom's work.
It enables us to define the restricted volume of a transcendental class 
on a compact K\"ahler manifold in natural way.
The relation can be extended to a transcendental class.
\end{abstract}

\section{Introduction}
Throughout this paper, $X$ denotes a smooth projective variety of dimension $n$, 
$D$ a (big) divisor on $X$ and 
$V$ an irreducible subvariety of dimension $d$ on $X$, 
unless otherwise mentioned.
Then the restricted volume of $D$ along $V$ is defined to be 
\begin{equation*}
\V{V}(D) := \limsup_{k \to \infty} \dfrac
{\dim H^{0}(X|V, \mathcal{O}_{X}(kD)) }{k^d/d!}. 
\end{equation*}
Here we denote by 
$H^{0}(X|V, \mathcal{O}_{X}(kD)) \subseteq H^{0}(V, \mathcal{O}_{V}(kD))$ 
the space of global sections of $\OV(kD)$ on $V$ that can be extended to $X$. 
Roughly speaking, the restricted volume measures the number of sections of $\OV(kD)$ 
which can be extended to $X$.
The notion of the restricted volume first appeared in \cite{Tsu06}. 
The restricted volume has many applications in various situations
(see \cite{HM06}, \cite{Ta06}).
The properties of the restricted volume 
are studied in \cite{ELMNP09}, \cite{BFJ09} and so on.

On the other hand, it is an important problem to determine  
when $D$ admits a Zariski decomposition. 
Here a decomposition $D=P+N$ is said to be a Zariski decomposition, if 
$P$ is a nef $\RR$-divisor and $N$ is an effective $\RR$-divisor 
such that the following map is an isomorphism for any positive integer $k>0$:
\begin{equation*}
H^{0}(X, \OX(\lfloor kP \rfloor)) \longrightarrow  H^{0}(X, \OX(kD)).
\end{equation*}
This map is the natural map induced by the section $e_{k}$, 
where $e_{k}$ is the standard section of the effective divisor $\lceil kN \rceil$.
Here $\lfloor G \rfloor$ (resp. $\lceil G \rceil$) denotes 
the divisor defined by the round-downs 
(resp. the round-ups) of the coefficients of an $\RR$-divisor $G$.

When $D$ is an ample divisor, the restricted volume $\V{V}(D)$ of $D$ along $V$ is equal to 
the self-intersection number $(D^{d}\cdot V)$ of $D$ on $V$. 
Therefore the restricted volume $\V{V}(D)$ along $V$ 
depends only on the first Chern class (the numerical class) of $D$ when $D$ is ample.  
In general, 
the restricted volume has the same property if $V$ is not contained in 
the augmented base locus $\mathbb{B}_{+}(D)$ of $D$ 
(see \cite[Theorem A]{ELMNP09}). 
The augmented base locus of $D$ is a subvariety on $X$ 
which measures how far $D$ is from ample divisors 
(see \cite[Section 1]{ELMNP06} for the precise definition and the properties).

It is natural to 
ask whether the restricted volume $\V{V}{(D)}$ of $D$ 
depends only on the numerical class of $V$.
In \cite{BFJ09}, the question is affirmatively answered 
when the codimension  of $V$ is one.
In this paper, we give a necessary and sufficient condition for $D$, that
the restricted volume $\V{V}(D)$ of $D$ depends only on the numerical class of $V$.
The condition is related to the existence of a Zariski decomposition of $D$ as follows:

\begin{theo}\label{ main}
Let $D$ be a big divisor on a smooth projective variety $X$. 
Then the following conditions are equivalent.

$(1)$ $D$ admits a Zariski decomposition.

$(2)$ $\V{V}(D) = \V{V^{'}}(D)$ holds for any pair of subvarieties $V$ and $V^{'}$ on $X$
such that $V\equiv V^{'}$ and $V, V^{'} \not \subseteq \mathbb{B}_{+}(D)$. 

$(3)$ $\V{C}(D) = \V{C^{'}}(D)$ holds for any pair of curves $C$ and $C^{'}$ on $X$
such that $C\equiv C^{'}$ and $C, C^{'} \not \subseteq \mathbb{B}_{+}(D)$. 
\end{theo}
\begin{rem}
It is sufficient for the proof of Theorem \ref{ main} 
to show that condition (1) (resp. (3)) implies condition (2) (resp. (1)) 
since condition (2) clearly leads to condition (3). 
\end{rem}\vspace{-1mm}
When subvarieties $V$ and $V^{'}$ are numerically equivalent, 
we write $V \equiv V^{'}$. 
Condition (2) means that the restricted volume $\V{V}(D)$
of $D$ depends only on the numerical class of $V$. 
Theorem \ref{ main} implies  
that the restricted volumes along some numerically equivalent subvarieties  
are different when $D$ does not admit a Zariski decomposition.

When $V$ is the ambient space $X$, the restricted volume of $D$ is equal to 
the usual volume $\mathrm{vol}_{X}(D)$ of $D$.
The usual volume has been studied by several authors.
The general theory is presented in details in \cite{La}.
In his paper \cite{Bou02}, 
Boucksom gave an analytic description of  the usual volume 
with positive curvature currents 
which represent the first Chern class of $D$,
by using a result of Fujita on the approximation of Zariski decompositions and the singular holomorphic Morse inequalities. 
In other words, Boucksom expressed the usual volume of $D$ 
in terms of the first Chern class of $D$.

The restricted volume $\V{V}(D)$ 
along $V$ depends only on the first Chern class $c_{1}(D)$ of $D$ 
if $V$ is not contained in the augmented base locus $ {\mathbb{B}}_{+}(D)$ of $D$. 
Then Boucksom's description for the usual volume 
can be generalized to the restricted volume as follows:

\begin{theo}\label{ main2}
Let $D$ be a big divisor on a smooth projective variety $X$. 
Assume that $V$ is not contained in 
the augmented base locus $ {\mathbb{B}}_{+}(D)$ of $D$.
Then the restricted volume of $D$ along $V$ satisfies the following equality:
\begin{equation*}
\vol (D) = \sup_{T\in c_1(D)} \int_{V_{\reg}} { (T\vert_{V_{\reg}})_{\ac}^d} 
\end{equation*}
where $T$ ranges among positive $(1,1)$-currents 
with analytic singularities in $c_{1}(D)$ 
whose singular loci do not contain $V$.
\end{theo}
Here we denote by $T\vert_{V_{\reg}}$ the restriction of $T$ to the regular locus $V_{\reg}$ of $V$ and 
by $(T\vert_{V_{\reg}})_{\ac}$ the absolutely continuous part of $T\vert_{V_{\reg}}$ 
(see subsection 2.4 for the precise definition).
Theorem \ref{ main2} enables us to define the restricted volume of 
a transcendental class  on a compact K\"ahler manifold in natural way.

\begin{defi} \label{ def}
Let $W$ be an irreducible analytic subset of dimension $d$ on a compact K\"ahler manifold $M$ 
and let $\A$ a class in $H^{1,1}(M , \RR)$.
Assume that $W$ is not contained in the non-K\"ahler locus $E_{nK}(\A)$ of $\A$. 
Then \textit{the restricted volume} of $\A$ along $W$ is defined to be
\begin{equation*}
\mathrm{vol}_{M|W}(\A) := \sup_{T\in \A} \int_{W_{\reg}}{({T|_{W_{\reg}}})_{\ac}^d} 
\end{equation*}
where $T$ ranges among positive $(1,1)$-currents with analytic singularities in $\A$ 
whose singular loci do not contain $W$.
\end{defi}

Here the non-K\"ahler locus is an analytic counterpart of the augmented base locus 
(see \cite[Definition 3.14]{Bou04} for the precise definition of the non-K\"ahler locus). 
When $\A$ is the first Chern class 
of some divisor $D$, the non-K\"ahler locus $E_{nK}(\A)$ coincides 
with the augmented base locus $\mathbb{B}_{+}(D)$.
For this extended definition, the properties of the usual restricted volume hold. 
For example, the continuity, log concavity, Fujita's approximations and so on 
(see subsection 4.2).
Moreover, an analogue of Theorem \ref{ main} holds for the extended definition as follows.
The proof gives another proof of Theorem \ref{ main} by using analytic methods 
(see subsection 4.3).
\begin{theo}\label{ main3}
Let $\A$ be a big class in $H^{1,1}(X, \RR)$ on a smooth projective variety $X$.
Then the following conditions are equivalent.

$(1)$ $\A$ admits a Zariski decomposition.

$(2)$ $\V{V}(\A) = \V{V^{'}}(\A)$ holds for any pair of subvarieties $V$ and $V^{'}$ on $X$
such that $V\equiv V^{'}$ and $V, V^{'} \not \subseteq E_{nK}(\A)$.

$(3)$ $\V{C}(\A) = \V{C^{'}}(\A)$ holds for any pair of curves $C$ and $C^{'}$ on $X$
such that $C\equiv C^{'}$ and $C, C^{'} \not \subseteq E_{nK}(\A)$.

\end{theo}
Here we say that a big class admits a Zariski decomposition if the positive 
part of its divisorial Zariski decomposition is nef (see subsection 2.7).
When $\A$ is the first Chern class of some divisor $D$ 
(that is, $\A$ is contained in the N\'eron-Severi space), 
a Zariski decomposition of $\A$ 
coincides with that of  $D$.
However, 
a class $\A$ is not necessarily contained in the N\'eron-Severi space of $X$ 
even if $X$ is projective. 
Therefore Theorem \ref{ main3} is essentially stronger statement than Theorem \ref{ main}.

\section{Preliminaries} 
In this section, we prepare for the proofs. 
The propositions in this section may be known facts. 
However we give comments or references for the readers' 
convenience. 
Throughout this section, $M$ denotes a compact K\"ahler manifold of dimension $n$.

\subsection{Currents}
Since $M$ is a K\"ahler manifold, 
$H^{p,p}(M,\CC)$ is identified with the quotient of the space of $d$-closed $(p,p)$-currents modulo the $\ddbar$-exact currents.
For our purpose, the case of $p=1$ is important. 
We say that a function $\varphi$ is a  potential function of a $(1,1)$-current $T$ 
if $T = \ddbar \varphi$. 
Notice that a (local) potential function is uniquely determined 
modulo the pluriharmonic functions.
If $T$ is $d$-closed, we can locally take a potential function of $T$.   
 A $d$-closed $(1,1)$-current is said to have analytic (resp. algebraic) singularities 
(along the subscheme $V(\I)$ defined by an ideal sheaf $\I$), 
if its potential function $\varphi$ can be locally written as 
\begin{equation*}
\varphi =\frac{c}{2}\log(|f_1|^2+...+|f_k|^2) + v
\end{equation*}
for some $c \in \RR_{>0}$ (resp. $c \in \QQ_{>0}$), 
where $f_1,\dots,f_k$ are local generators of $\I$ and $v$ is a smooth function. 
Then $V(\I)$ is called the singular locus of the current. 

\subsection{The pull-backs of $(1,1)$-currents} 
Let us confirm the definition of 
the pull-back of a $d$-closed $(1,1)$-current by a holomorphic map. 
Let $T$ be a $d$-closed $(1,1)$-current on $M$ and let 
$f : Z\to M$ a holomorphic map from a complex manifold $Z$ to $M$. 
Assume that the image of $Z$ by $f$ is not contained 
in the polar set of a potential function of $T$.
Then we can define the pull-back of $T$ by $f$ as follows:  
Since $T$ is $d$-closed, 
we can locally take a potential function $\varphi $ of $T$. 
Then the pullback of $T$ is (locally) defined to be $f^{*}T:=\ddbar f^{*} \varphi $. 
It determines a global $d$-closed $(1,1)$-current on $Z$
since $\ddbar f^{*} \varphi$ does not depend 
on the choice of a local potential function $\varphi$. 
In particular, we can restrict a $d$-closed $(1,1)$-current to a submanifold  
if the submanifold is not contained in the polar set of its potential function.
Notice that the pull-back $f^{*}T$ is also positive if $T$ is positive.

\subsection{Multiplier ideal sheaves and Skoda's lemma.}
In this paper, we often use the description 
of the restricted volume with the multiplier ideal sheaf 
which was proved in \cite{ELMNP09}. 
We denote by $\I(T)$ the multiplier ideal sheaf 
associated to a $d$-closed $(1,1)$-current $T$. 
That is, $\I(T)$ is the sheaf of germs of holomorphic functions $f$ 
such that $|f|^{2} e^{-2\varphi}$ is locally integrable, 
where $\varphi$ is a local potential function of $T$. 
(Note that this definition does not depend on the choice of a local potential function.) 
See \cite{DEL00}, \cite{Dem} for more details. 
Skoda's Lemma gives a relation between the Lelong number of $T$ and 
the multiplier ideal sheaf $\I(T)$. 
Here the Lelong number $\nu(T,x)$ of 
an almost positive $(1,1)$-current $T = \ddbar \varphi$ at $x$ 
is defined by $\displaystyle \nu(T,x) : = \liminf_{z \to x} \dfrac{\varphi (z) }{\log |z-x|}$ 
where $z$ is a local coordinate centered at $x$.

\begin{lemm}$($\cite{Sko72}$).$ \label{ Skoda} Let $\varphi$ be a potential function of 
an almost positive current $T$.

$\mathrm{(a)}$ If $\nu(T,x) <1$, 
then $e^{-2 \varphi}$ is integrable in a neighborhood of $x$. 
In other words, 
the stalk $\I (T)_{x}$ of $\I (T)$ at $x$ is equal to the stalk $\mathcal{O}_{M,x}$ of 
the structure sheaf of $M$ at $x$.

$\mathrm{(b)}$ If $\nu(T,x) \geq n+s$ for some positive integer $s$, then $e^{-2 \varphi}\geq C |z-x|^{-2n-2s}$ in 
a  neighborhood of $x$. In particular, we have 
$\I (T)_{x} \subseteq  \frak{m}_{M,x}^{s+1}$, 
where $\frak{m}_{M, x}$ is the maximal ideal of $\mathcal{O}_{M,x}$.
\end{lemm}

\subsection{Lebesgue decompositions}
A positive current $T$ can be  locally regarded as 
a $(1,1)$-form with measure coefficients.
Thus it admits the Lebesgue decomposition into the absolutely continuous part and 
the singular part with respect to the Lebesgue measure. 
Therefore we obtain the decomposition $T = T_{\ac} + T_{\sing}$, 
where $T_{\ac}$ (resp.  $T_{\sing}$) is 
the absolutely continuous part (resp. the singular part) of $T$. 
This decomposition is globally determined 
thanks to the uniqueness of the Lebesgue decomposition. 
Now $T_{\ac}$ is considered as a $(1, 1)$-form with ${L^{1}_{loc}}$-function coefficients. 
Thus we can define the product $T_{\ac}^{k}$ of $T_{\ac}$ almost everywhere. 
We have $T_{\ac} \geq \gamma $ if 
$T \geq \gamma $ for some smooth $(1,1)$-form $\gamma$. 
In particular, the absolutely continuous part $T_{\ac}$ is positive 
if $T$ is a positive $(1,1)$-current
(see \cite[Section 2.3]{Bou02} for more details). 

\subsection{Approximations of currents}
Let $T= \theta + \ddbar \varphi$ be a $(1,1)$-current in a class $\A \in  H^{1,1}(M, \RR)$, 
where $\theta$ is a smooth $(1,1)$-form in $\A$ and 
$ \varphi$ is an $L^{1}$-function on $M$.
We assume that $T\geq \gamma $ holds for a smooth form $\gamma $. 
Fix a K\"ahler form $\omega$ on $M$.  
Then we can approximate $T$ by smooth forms in the following sense:

\begin{theo}\label{ app-dem82}$($\cite[TH\'EOR\`EME 9.1]{Dem82}$)$.
There exists a decreasing sequence of smooth functions $\varphi _{k}$ converging to $\varphi $ 
such that if we set $T_{k} = \theta + \ddbar \varphi _{k} \in \A$, we have

\ \ $\mathrm{(a)}$\ \ $T_{k} \longrightarrow  T$ weakly and $T_{k} \longrightarrow  T_{\ac} $ almost everywhere on $M$.

\ \ $\mathrm{(b)}$\ \ $T_{k} \geq \gamma - C\lambda _{k} \omega $, 
where $C$ is a positive constant depending only on $(M,\omega )$, 
and $\{ \lambda _{k} \}_{k=1}^{\infty}$ is a decreasing sequence of continuous functions  such that 
$\lambda _{k}(x) \searrow \nu(T,x) $ for all $x \in M$.

\end{theo}
Roughly speaking, Theorem \ref{ app-dem82} says that it is possible to smooth a given current $T$ insides 
the class $\A$, but only with the loss of positivity controlled by the Lelong numbers of $T$. 
By the proof of Theorem \ref{ app-dem82} in \cite{Dem82}, we may add the following property to Theorem \ref{ app-dem82}. 
(Recall that $T_{k}$ is obtained from $T$ by convolution with a regularized kernel.)

\textit{
$\mathrm{(c)}$\ \ If $T$ is smooth on a given open set $U$ of $M$, 
then $T_{k}$ converges to $T$ in $C^{\infty}(U)$. 
}\\
\ \ The following theorem asserts 
that it is possible to approximate a given current with currents 
with analytic singularities. 
There is a loss of positivity but it is arbitrary small. 

\begin{theo}\label{ app-bou02} $($\cite{Dem92}, \cite[Theorem 2.4]{Bou02}$).$
There exists a sequence of functions $\varphi _{k}$ with analytic singularities 
converging to $\varphi $ such that if we set $T_{k} = \theta + \ddbar \varphi _{k} \in \A$, we have

\ \ $\mathrm{(a')}$\ \ $T_{k} \longrightarrow  T$ weakly and $T_{k,\ac} \longrightarrow  T_{\ac} $ almost everywhere.

\ \ $\mathrm{(b')}$\ \ $T_{k} \geq \gamma -\e_{k}\omega $, where $\e_{k}$ is a positive number converging to zero.

\ \ $\mathrm{(c')}$\ \ The Lelong number $\nu(T_{k},x)$ increases to $\nu(T,x)$ uniformly with respect to $x \in M$.
\end{theo}

In the proof of \cite[Theorem 2.4]{Bou02}, 
the convergence $T_{k,\ac} \longrightarrow  T_{\ac} $ in $\mathrm{(a')}$ 
was obtained from only property 
$\mathrm{(a)}$ in Theorem \ref{ app-dem82}.
Therefore we may add the following property $\mathrm{(d')}$ thanks to 
property $\mathrm{(c)}$.

\textit{
\ $\mathrm{(d')}$\ \  If $T$ is smooth on a given open set $U$ of $M$, 
then $T_{k, \ac}$ converges to $T_{\ac}$ in $C^{\infty}(U)$.}
\\
\ \ It yields the following corollary.

\begin{cor}\label{ app-bou02co}
Let $W$ be an irreducible analytic subset on $M$. 
Assume that $T\vert_{W_{\reg}}$ is smooth except some analytic set on $W_{\reg}$. 
Then $T_{k}$ in Theorem \ref{ app-bou02} satisfies the following property:
\begin{equation*}
\big( T_{k} \vert_{W_{\reg}} \big)_{\ac} 
\longrightarrow {\big( T \vert_{W_{\reg}} \big) }_{\ac}\ \ \ \ 
\text{ almost everywhere on }W_{\reg}.
\end{equation*}
\end{cor}

\subsection{Asymptotic invariants of base loci.}
In this subsection, we collect the definitions and properties of 
the augmented base locus and the restricted base locus of a divisor. 
See Definition 1.2, 1.12 in \cite{ELMNP09} for more details. 

\begin{defi}
Let $L$ be an $\RR$-divisor on $X$. \\
(1) When $L$ is a $\QQ$-divisor, \textit{the stable base locus} 
$\mathbb{B}(L)$ of $L$ 
is defined by  
\begin{equation*}
\mathbb{B}(L):= \bigcap_{k} {\rm{Bs}}(|kL|) 
\end{equation*}
where $k$ runs through all positive integers such that $kL$ is 
a $\ZZ$-divisor. \\
(2) \textit{The augmented base locus} $\mathbb{B}_{+}(L)$ 
of $L$ is defined by  
\begin{equation*}
\mathbb{B}_{+}(L):= \bigcap_{L \equiv A + E} {\rm{Supp}}(E) 
\end{equation*}
where the intersection is taken over all decomposition $L \equiv A+E$, 
$A$ and $E$ are $\RR$-divisors such that $A$ is ample and $E$ is effective.\\
(3) \textit{The restricted base locus} $\mathbb{B}_{-}(L)$ 
of $L$ is defined by 
\begin{equation*}
\mathbb{B}_{-}(L):= \bigcup_{A } \mathbb{B}(L+A) 
\end{equation*}
where the union is taken over all ample $\RR$-divisors $A$ such that 
$L+A$ is a $\QQ$-divisor. 
\end{defi}

Let us recall the definitions of the non-K\"ahler locus and the non-nef locus 
of a class $\A \in H^{1,1}(M, \RR)$ 
(see definition 3.3, 3.17 in \cite{Bou04}). 

\begin{defi}
(1) Assume $\A$ is a big class (that is, it possesses a K\"ahler current). 
Then \textit{the non-K\"ahler locus} $E_{nK}(\A)$ of $\A$ is defined to be 
\begin{equation*}
E_{nK}(\A) := \bigcap_{T \in \A} E_{+}(T) 
\end{equation*}
where $T$ ranges among the K\"ahler currents in $\A$. 
Here $E_{+}(T)$ denotes $\{x \in M\ |\ \nu(T,x)>0  \}$. \\
(2) Assume $\A$ is a pseudo-effective class (that is, it possesses a positive current). 
Then \textit{the non-nef locus} $E_{nn}(\A)$ of $\A$ is defined to be 
\begin{equation*}
E_{nn}(\A) :=  \{x \in M\ |\ \nu(\alpha ,x)>0  \}. 
\end{equation*}
Here $\nu(\alpha ,x)$ is $\sup_{\e >0} \nu(T_{\min, \e}, x)$,  
where $T_{\min, \e}$ is a current with minimal singularities in $\A + \e \{\omega \}$ 
and $\{ \omega \}$ is the class of a K\"ahler form $\omega$ on $M$. 
\end{defi}

In this paper, we need the following properties of these base loci. 
We state the properties of the non-K\"ahler (non-nef) locus 
without the proofs (but we give the references). 
Note that the non-K\"ahler (resp. non-nef) locus of $\A$ coincides with  
the augmented  (resp. restricted) base locus of $L$ 
when $\A$ is the first Chern class of some divisor $L$. 
Thus, the augmented (restricted) base locus also satisfies the following properties.  
\begin{prop} \label{ pro-bs}
$(1)$ $($\cite[Section 5]{ELMNP09}$).$ 
Given a class $\A \in H^{1,1}(M, \RR)$, 
we have $E_{nK}(\beta) \subset E_{nK}(\A)$ 
for every class $\beta$ in a sufficiently small open neighborhood of 
$\A \in H^{1,1}(M, \RR)$. \\
$(2)$ $($\cite[Theorem 3.17]{Bou04}$).$
If $\A$ is big, 
there is a K\"ahler current $S$ in $\A$ with analytic singularities 
such that $E_{+}(S) = E_{nK}(\A)$. \\
$(3)$ $($\cite[Proposition 3.6]{Bou04}$).$
If $\A$ is big, we have 
\begin{equation*}
E_{nn}(\A) = \{ x \in M\ |\ \nu(T_{\min}, x) > 0 \} 
\end{equation*}
where $T_{\min}$ is a current with minimal singularities in $\A$. 
\end{prop}
Precisely speaking, property (1) was proved only for the augmented base locus
in \cite[Section 5]{ELMNP09}. 
However, we shall give the proof for the non-K\"ahler locus
in the proof of Proposition \ref{ concave}. 

\subsection{Divisorial Zariski decompositions.}

In this subsection, we confirm the definition of the divisorial Zariski decomposition of a class.
The divisorial Zariski decomposition of a big divisor coincides with its $\sigma$-decomposition.
The divisorial Zariski decomposition is studied in \cite{Bou04} and 
the $\sigma$-decomposition is studied in \cite{Nak}.

Let $\A$ be a pseudo-effective class in $H^{1,1}(M,\RR)$.
Then the effective $\RR$-divisor $N$ is defined to be 
\begin{equation*}
 N:=\sum_{F:\mathrm{prime\ div}} \nu(\A , F) F. 
\end{equation*}
Here $\nu(\A, F)$ denotes the Lelong number along a prime divisor $F$ 
which is defined by $\inf_{x \in F} \nu(\A, x)$.
The class $\{N \}$ of $N$ is called the negative part of the 
divisorial Zariski decomposition of $\A$. 
The class $P$ defined by $P:= \A -\{ N \}$ is called the positive part.
Then the decomposition $\A = P + \{ N \}$ is said to be the divisorial Zariski decomposition of $\A$.
In general, the positive part $P$ is nef in codimension one
(that is, the codimension of its non-nef locus is strictly larger than one). 
We say that $\A$ admits a Zariski decomposition if the positive part $P$ is nef.
If $\A$ is the first Chern class of a big divisor, 
this definition coincides with that of the divisor 
(which was described in section 1).
For example, if $M$ is surface, any big class admits a Zariski decomposition 
(see \cite[section 4]{Bou04}).
By the construction of $N$, positive currents in $\A$ and positive currents in $P$ are identified  
by the correspondence $T \in \A  \longmapsto T-[N] \in P$.

\section{Restricted volumes and Zariski decompositions }

\subsection{The positive part and restricted volumes }
The main aim in this section is to prove Theorem \ref{ main}.
Throughout this section, 
let $D$ be a big divisor on a smooth projective variety $X$ of dimension $n$.  
Then we consider the divisorial Zariski decomposition $D = P+N$ of $D$.
We first establish Proposition \ref{ com-posi} for the proof of Theorem \ref{ main}. 
This proposition asserts that 
the restricted volume of $D$ can be computed with the positive part $P$.

\begin{prop}\label{ com-posi}
Let $W$ be an irreducible subvariety of dimension $d$ on $X$. 
Assume that $W$ 
is not contained in the augmented base locus $\mathbb{B}_{+}(D)$ of $D$.
Then the equality $\V{W} (D) = \V{W} (P)$ holds.
\end{prop}

\begin{rem}
In general, $P$ is an $\RR$-divisor. 
Then $\V{W} (P)$ can be defined by the limit of the restricted volumes of 
$\QQ$-divisors which converge to $P$ in the N\'eron-Severi space.
Thanks to the continuity of the restricted volume (see \cite[Theorem 5.2]{ELMNP09}), 
$\V{W} (P)$ does not depend on the choice
of $\QQ$-divisors which converge to $P$.
\end{rem}

\begin{proof}

Since $D$ is a big divisor, 
there is an effective $\QQ$-divisor which is $\QQ$-linearly 
equivalent to $D$.
Therefore we may assume that $D$ is effective. 
(Recall that the restricted volume has the homogeneity.)  
Moreover we may assume that the support of $D$ does not contain $W$, 
since $W$ is not contained in $\mathbb{B}_{+}(D)$. 
In particular, $W$ is not contained in the support of $N$ nor in that of $P$.

Since $D=P+N$ is a divisorial Zariski decomposition, 
there exists the natural isomorphism 
$H^{0}(X, \OX(\lfloor kP \rfloor)) \cong H^{0}(X, \OX(kD))$ induced by 
the section $e_{k}$ for a positive integer $k>0$, 
where $e_{k}$ is the standard section of the effective divisor $\lceil kN \rceil$ 
(see \cite[Theorem 5.5]{Bou04} or \cite{Nak}).
Then we consider the following commutative diagram: 
\[
\begin{CD} H^0(X,\OX (\lfloor kP \rfloor)) @>\cdot e_{k}>>H^0(X,\OX (kD))\\
@V{f}VV  @V{g}VV\\
H^0(W,\OW (\lfloor kP \rfloor)) @> \cdot e_{k} \vert_{W}>>H^0(W,\OW (kD)), 
\end{CD}
\]
where $f$ and $g$ are the restriction maps. 
The diagram induces the map 
$\mathrm{Im}(f) \xrightarrow[]{\cdot e_{k} \vert_{W}} \mathrm{Im}(g)$. 
This map is surjective since the horizontal map above in the diagram 
is an isomorphism. 
Now ${e_{k}} \vert_{W}$ is a nonzero section
since $W$ is not contained in the support of $N$. 
It implies that the map below 
in the diagram is injective. 
Thus, the map $\mathrm{Im}(f) \to \mathrm{Im}(g)$ is an isomorphism. 
It yields  
\begin{equation}
\V{W}(D) = \limsup _{k \to \infty}\dfrac{h^{0}(X|W,\OX (\lfloor kP \rfloor))}{k^{d}/d! }. \label{ eq}
\end{equation}

When $P$ is a $\QQ$-divisor, Proposition \ref{ com-posi} follows from this equality and 
the homogeneity of the restricted volume. 
However, we need the following argument  when $P$ is an $\RR$-divisor.

Let $P=\sum_{i} a_{i}D_{i}$ be the irreducible decomposition of $P$. 
Note that  $a_{i}$ is positive for any $i$ since $P$ is effective. 
We want to approximate the $\RR$-divisor $P$ with suitable $\QQ$-divisors. 
For this purpose, 
we define a $\QQ$-divisor $P_{\ell}$ by $P_{\ell} :={\ell}^{-1} {\lfloor \ell P \rfloor}$.
Then, from the definition of the round-down, 
we obtain $\lfloor \ell P \rfloor \leq \ell P\leq \lfloor \ell P \rfloor + F$ 
for any positive integer $\ell$, 
where $F$ is the effective divisor defined by $F:={ \sum_{i}D_{i} }$.
These inequalities imply that 
$P_{\ell}$ converges to $P$ in the N\'eron-Severi space.
For a sufficiently large $\ell$, $\mathbb{B}_{+}(P_{\ell})$ does not contain $W$  
(see Proposition \ref{ pro-bs} (1)). 
Therefore we have 
\begin{equation*}
 \V{W}(P) =\displaystyle{ \lim_{\ell \to \infty} \V{W}(P_{\ell}) }
\end{equation*} 
from the continuity of the restricted volume.

Now we prove the inequality $\V{W}(D) \geq  \V{W}(P_{\ell}) $ 
for any $\ell$ 
in order to show the inequality $\V{W}(D) \geq  \V{W}(P)$. 
By the homogeneity of the restricted volume and equality (\ref{ eq}), 
we obtain the following equalities:
\begin{align*}
\V{W}(D) = \limsup_{k \to \infty} \dfrac{h^{0}(X|W,\OX (\lfloor kP \rfloor))}{k^{d}/d! } 
&= \limsup_{k \to \infty} \dfrac{h^{0}(X|W,\OX (\lfloor \ell kP \rfloor))}{\ell^{d} k^{d}/d! }, \\
\V{W} (P_{\ell} ) &= \limsup _{k \to \infty} \dfrac {h^{0}(X|W,\OX (k \lfloor \ell P \rfloor))}{\ell^{d} k^{d}/d! }.
\end{align*}
Note that $\lfloor\ell kP \rfloor - k \lfloor \ell P \rfloor$ is an effective divisor and 
its support is contained in the support of $D$.
Since $W$ is not contained in the support of $D $, we have 
\begin{equation*}
\dfrac{ h^{0}(X|W ,\OX (\lfloor \ell kP \rfloor))} {\ell^{d} k^{d}/d!} \geq 
\dfrac{h^{0}(X|W,\OX (k \lfloor \ell P \rfloor))} {\ell^{d} k^{d}/d!}.
\end{equation*}
It implies the inequality $\V{W}(D) \geq  \V{W}(P_{\ell}) $ for any $\ell$. 
Therefore we obtain $\V{W}(D) \geq  \V{W}(P)$.

Finally we show the converse inequality $\V{W}(D) \leq  \V{W}(P)$. 
For this purpose,  
we shall estimate $\lfloor\ell kP \rfloor - k \lfloor \ell P \rfloor$ from above. 
By a simple computation, 
we obtain 
\begin{align*}
\lfloor\ell kP \rfloor - k \lfloor \ell P \rfloor &=
 \sum_{i}\big( \lfloor \ell k a_{i} \rfloor - k \lfloor \ell a_{i} \rfloor \big) D_{i} \\
&\leq  \sum_{i} \big(  k (\ell a_{i}  - \lfloor \ell a_{i} \rfloor ) \big) D_{i} \\
&\leq \sum_{i}  k  D_{i} = kF.
\end{align*}
Since the support of $F$ does not contain $W$, 
the inequality above yields 
\begin{align*}
\V{W}(D) & =\limsup_{k \to \infty} \dfrac{ h^{0}(X|W ,\OX ( \lfloor \ell kP \rfloor))} {\ell^{d} k^{d}/d!} \\
&\leq  \limsup_{k \to \infty} \dfrac{ h^{0}\big( X|W ,\OX \big( k (\lfloor \ell P \rfloor + F) \big) \big)} 
{\ell^{d} k^{d}/d!} \\
&= \dfrac{1}{\ell^{d}} \V{W} (\lfloor \ell P \rfloor + F).
\end{align*}
Now we have 
$\V{W} (\lfloor \ell P \rfloor + F) \ell^{-d} = \V{W} (P_{\ell} + {\ell}^{-1}{F} )$ 
from the homogeneity of the restricted volume. 
Further, $ P_{\ell}  + {\ell}^{-1}{F} $ converges to $P$ 
in the N\'eron-Severi space when $\ell$ tends to infinity. 
The continuity of the restricted volume implies  that 
$\V{W} (\lfloor \ell P \rfloor + F) \ell^{-d} $ 
converges to $\V{W}(P)$.
Hence we obtain the converse inequality $\V{W}(D) \leq  \V{W}(P)$. 
\end{proof}

\begin{cor}\label{ com-posico}
Let $W $ be an irreducible subvariety of dimension $d$ on $X$ 
Assume that $W$ is not contained in $\mathbb{B}_{+}(D)$.
If $D$ admits a Zariski decomposition (that is, the positive part $P$ of its 
divisorial Zariski decomposition is nef), 
then the equality $\V{W} (D) = (W\cdot P^{d})$ holds.
\end{cor}

\begin{proof}
By Proposition \ref{ com-posi}, we have $\V{W} (D) = \V{W}(P)$.
Since $P$ is nef, there exist ample $\QQ$-divisors $A_{k}$ which 
converge to $ P$ in the N\'eron-Severi space.
Since $A_{k}$ is ample, the restricted volume $\V{W}(A_k)$ 
of $A_{k}$ along $W$ is equal to the self-intersection number  $(W \cdot A_{k}^{d})$ on $W$. 
By the continuity of the restricted volume and the self-intersection number, 
we obtain 
\begin{align*}
\V{W} (D) &= \V{W}(P) \\
&= \lim_{k \to \infty} \V{W} (A_k) \\
&= \lim_{k \to \infty} (W \cdot A_{k}^{d}) = (W \cdot P^{d}).
\end{align*} \end{proof}

\subsection{Proof of Theorem \ref{ main}.}
This subsection is devoted to complete the proof of Theorem \ref{ main}. 
First, we shall see that condition (1) implies condition (2). 
We assume that $D$ admits a Zariski decomposition $D=P+N$ 
(that is, the positive part $P$ of its divisorial Zariski decomposition is nef). 
Take subvarieties $V$ and $V^{'}$ on $X$ such that $V \equiv V^{'}$ and $V, V^{'} \not\subseteq \mathbb{B}_{+}(D)$. 
Then the restricted volumes of $D$ can be computed 
by the self-intersection number of the positive part $P$ from Corollary \ref{ com-posico}.
That is, $\vol(D) =(V\cdot P^{d}) $ and $\vo(D) =(V^{'}\cdot P^{d}) $ hold.
Since $V$ and $V^{'}$ are numerically equivalent, 
$(V\cdot P^{d})$ coincides with $(V^{'}\cdot P^{d})$.
Hence the equality $\vol(D) = \vo (D)$ holds.

We shall show that condition (3) implies condition (1). 
Let $D=P+N$ be a divisorial Zariski decomposition of a big divisor $D$. 
We assume that $P$ is not nef for a contradiction. 
Since $P$ is not nef, 
the restricted base locus $\mathbb{B}_{-}(P)$ of $P$ is not empty. 
From this condition, we want to construct curves $C$, $C^{'}$ 
such that the restricted volume $\V{C} (P)$ along $C$ is different from 
the restricted volume $\V{C^{'}} (P)$ along $C^{'}$.

For a construction of such curves, 
we take a very ample divisor $A$ on $X$ 
and a point $x_{0}$ in $\mathbb{B}_{-}(P)$. 
Then there are smooth curves $C$ and $C^{'}$ 
with the following \vspace{0.1cm}properties:\\
\ \ \ (1)\ \ \ $C$ and $C^{'}$ are not contained in the augmented base locus $\mathbb{B}_{+}(D)$. \\
\ \ \ (2)\ \ \ $C$ passes through  $x_{0} \in \mathbb{B}_{-}(P)$.\\
\ \ \ (3)\ \ \ $C^{'}$ does not intersect with the restricted base locus $\mathbb{B}_{-}(P)$.\\
\ \ \ (4)\ \ \ $C$ and $C^{'}$ are complete intersections of members of the complete linear system of $A$. \vspace{0.1cm}

We can easily see that there exist such curves: 
A general member of $ |A|_{x_{0}}$ is irreducible and smooth,
where $ |A|_{x_{0}}$ is the linear system passing through $x_{0}$ in 
the complete linear system $ |A|$ of $A$ 
(see \cite[Theorem 2.5]{Zha09}). 
Then by taking a complete intersection of general members of $ |A|_{x_{0}}$, 
we can take a curve $C$ with properties (1), (2), (4). 
Now we construct a curve $C^{'}$ with properties (1), (3), (4). 
By the construction of the divisorial Zariski decomposition, 
the restricted base locus $\mathbb{B}_{-}(P)$ of the positive part $P$ is 
the countable union of subvarieties of codimension $\geq 2$. 
Thus the codimension of the intersection of $\mathbb{B}_{-}(P)$ and $H$ 
is greater than 
or equal to $3$ for a \lq\lq very" general member $H$ of $|A|$. 
It implies that a curve which is 
a complete intersection of very general members of $|A|$ 
does not intersect with $\mathbb{B}_{-}(P)$.

Now $C$ and $C^{'}$ are numerically equivalent 
since $C$ and $C^{'}$ are complete intersections 
of members of the same complete linear system. 
Thus, it follows 
the equality $\V{C}(D) = \V{C^{'}}(D) $ from condition (3) in Theorem \ref{ main}. 
By Proposition \ref{ com-posi}, we have the equality $\V{C}(P) = \V{C^{'}}(P) $. 
It is sufficient for a contradiction to prove the following lemma.  
In fact, $(C \cdot P)$ is equal to $(C^{'} \cdot P)$
since $C$ and $C^{'}$ are numerically equivalent.  
Therefore the following lemma implies $\V{C}(D) < \ \V{C^{'}}(D) $. 
It is a contradiction.

\begin{lemm} \label{ estimate}
In the situation above, the followings hold. \\
\hspace{1cm} $\mathrm{(A)}$\ \ \ $\V{C^{'}}(P) = (C^{'} \cdot P)$, 
\hspace{1cm} $\mathrm{(B)}$\ \ \ $ \V{C}(P) < (C\cdot P)$.
\end{lemm}

\begin{proof}
First we show equality (A). 
In general, $P$ is an $\RR$-divisor. 
Thus we should approximate $P $ with $\QQ$-divisors. 
We take ample $\RR$-divisors $A_{k}$ with the following properties: \\
\ \ \ (i)\ \ \ $P + A_{k}$ is a $\QQ$-divisor for a positive integer 
$k>0$. \\
\ \ \ (ii)\ \ \ $P + A_{k}$ converges to $P$ in the N\'eron-Severi space.

Since $A_{k}$ is ample, $\mathbb{B}_{+}(P+A_{k}) \subseteq  \mathbb{B}_{+}(P)$ for 
any $k$. 
Thus, it follows that $C^{'}$ is not contained in $\mathbb{B}_{+}(P+A_{k})$ 
from property (1). 
We take  a positive integer $a_{k}$ such that $a_{k} (P+A_{k})$ is a $\ZZ$-divisor. 
Then the homogeneity and the description of the restricted volume 
with the asymptotic multiplier ideal sheaf 
(which was proved in \cite[Theorem 2.13]{ELMNP09}) yields
\begin{equation*}
\V{C^{'}}(P+A_{k}) = \limsup_{\ell \to \infty} 
\dfrac{1}{{\ell a_{k}}}
h^{0} \big( C^{'}, \mathcal{O}_{C^{'}} \big( \ell a_{k} (P+A_{k}) \big) \otimes \J 
\big(\Vert \ell a_{k} (P+A_{k}) \Vert\big)\big\vert_{C^{'}}  \big). 
\end{equation*}

Here $\J (\Vert L \Vert )$ denotes the asymptotic multiplier ideal sheaf associated 
to a divisor $L$ 
(see \cite{DEL00} for the definition). 
Further $\mathcal{I} \vert_{V}$ denotes the ideal $\mathcal{I} \cdot \mathcal{O}_{V}$
for a ideal $\mathcal{I}$ and a subvariety $V$ on $X$.

We shall investigate the asymptotic multiplier ideal sheaf 
$\J (\Vert \ell a_{k} (P+A_{k}) \Vert )$ along ${C^{'}}$. 
Let $T_{\mathrm{min},k}$ be a current with minimal singularities in 
the first Chern class of $a_{k} (P+A_{k})$. 
Then the restricted base locus of $a_{k} (P+A_{k})$ is equals to the set 
$\big\{x \in X \ \big| \ \nu(T_{\mathrm{min},k}, x) >0 \big\}$ 
by Proposition \ref{ pro-bs} (3). 
On the other hand, 
the restricted base locus of $P+A_{k}$ is contained in 
the restricted base locus of $P$, since $A_{k}$ is an ample divisor. 
Further,  $C^{'}$ does not intersect with the restricted base locus of $P$ 
from property (3). 
Therefore the Lelong number of $T_{\mathrm{min},k}$ at $x \in C^{'}$ 
is zero. 
Thus we have
$\J(\ell T_{\mathrm{min},k}) \vert_{C^{'}} = \mathcal{O}_{C^{'}}$ 
for every $\ell >0$ by Skoda's Lemma. 
Thus, from Theorem \ref{ min} (which is proved in section 4.1), 
we have 
\begin{equation*}
 \V{C^{'}}(P+A_{k}) = \limsup_{\ell \to \infty} 
\dfrac{h^{0} \big( C^{'}, \mathcal{O}_{C^{'}} ( \ell a_{k} (P+A_{k}) \big)}{\ell a_{k}} .
\end{equation*}

Since $C^{'}$ is not contained in $\mathbb{B}_{+}(P)$, 
$(P+A_{k})$ is an ample divisor on $C^{'}$. 
By the Riemann-Roch formula, 
we obtain $\V{C^{'}} (P+A_{k}) = \big((P+A_{k}) \cdot C^{'} \big)$.
It follows $\V{C^{'}}(P) = (P \cdot C^{'} )$ from the continuity of 
the restricted volume.

Finally we show inequality (B). 
Consider the following commutative diagram: 
\[
\begin{CD} H^0\big( X,\OX (\lfloor kP \rfloor)\otimes \J(\Vert\lfloor kP \rfloor \Vert) \big) 
@>>>H^0 \big( C,\mathcal{O}_{C} (\lfloor kP \rfloor)\otimes \J(\Vert\lfloor kP \rfloor \Vert)\vert_{C} \big) \\
@VVV  @VVV\\
H^0\big( X,\OX (\lfloor kP \rfloor) \big) @>>>H^0 \big( C,\mathcal{O}_{C} (\lfloor kP \rfloor) \big).
\end{CD}
\]
The vertical map on the left hand is an isomorphism (see \cite[Theorem 1.8]{DEL00}).
Thus the vertical map on the right hand is surjective onto the image of the horizontal map. 
It yields 
\begin{equation*}
 \limsup_{k \to \infty} \dfrac{h^0\big(X |{C} ,\OX (\lfloor kP \rfloor) \big)}{k} \leq  
 \limsup_{k \to \infty} \dfrac{h^0 
 \big( C,\mathcal{O}_{C} (\lfloor kP \rfloor)\otimes 
 \J(\Vert\lfloor kP \rfloor \Vert)\vert_{C} \big) }{k}.
\end{equation*}
We have already proved that the left hand 
is equals to $\V{C}(P)$ in the proof of 
Proposition \ref{ com-posi}. 
For the proof of inequality (B), 
we need to estimate the right hand from above. 
For this purpose, we shall investigate the asymptotic multiplier ideal sheaf 
$\J(\Vert\lfloor kP \rfloor \Vert) $ along $C$. 

Take a positive current $S_{k}$ in the first Chern class of 
$\lfloor kP \rfloor$ such that $\J(S_{k}) = \J(\Vert\lfloor kP \rfloor \Vert)$. 
Let $P=\sum{a_{i} D_{i}}$ be an irreducible decomposition of $P$ and let 
$F$ the divisor which is defined by $F:=\sum_{i} D_{i}$. 
Then $kP - \lfloor kP \rfloor \leq F$ for any positive integer $k$. 
Thus we obtain 
\begin{equation*}
\nu(kT_{\mathrm{min}}, x) \leq \nu(S_{k},x) + \nu([F], x)
\end{equation*}
by the definition of a current with minimal singularities. 
Here 
$[F]$ denotes the positive current defined by the effective divisor $F$ 
and $T_{\mathrm{min}}$ denotes a current with minimal singular in $c_{1}(P)$. 
Since $\nu(T_{\mathrm{min}},x_{0}) $ is positive by property (2), 
we can take a positive rational number $p/q$ 
which is smaller than $\nu(T_{\mathrm{min}},x_{0})$. 
For simplicity, we put $c:=\nu([F],x_{0})$. 
Then we have $k p - c < \nu(S_{kq},x_{0})$. 
Skoda's Lemma implies 
$\J(\Vert\lfloor kP \rfloor \parallel)_{x_{0}} \subseteq \frak{m}_{X, x_{0}}^{{kp -c-n+1}}$, 
where $\frak{m}_{X, x_{0}}$ is the maximal ideal in 
$\mathcal{O}_{X, x_{0}}$.
Thus we obtain 
\begin{align*}
\limsup_{k \to \infty} \dfrac{h^0 \big( C,\mathcal{O}_{C} (\lfloor kP \rfloor)\otimes \J(\Vert\lfloor kP 
\rfloor \Vert)\vert_{C} \big) }{k}   
&\leq  \limsup_{k \to \infty} \dfrac{h^0 \big( C,\mathcal{O}_{C} (\lfloor kqP \rfloor)\otimes
 {\frak{m}^{{kp -c-n+1}}_{X, x_{0}}} \vert_{C} \big) }{kq}  \\
 &\leq  \limsup_{k \to \infty} \dfrac{h^0 \big( C,\mathcal{O}_{C} (\lfloor kqP \rfloor)\otimes
{\frak{m}^{{kp -c-n+1}}_{C, x_{0}}}\big) }{kq}  \\
& = \limsup_{k \to \infty} \dfrac{h^0 \big( C,\mathcal{O}_{C} (\lfloor kqP \rfloor - (kp -c-n+1)[x_{0}] )\big) } {kq} ,
\end{align*}
where $[x_{0}]$ is the divisor on $C$ defined by $x_{0}$. 
Now $(\lfloor qkP \rfloor - (kp-c-n+1)[x_{0}] )$ is nef on $C$. 
Thus the dimension of its first cohomology group converges to zero 
when $k$ tends to infinity . 
By using the Riemann-Roch formula again, we obtain 
\begin{align*} \V{C}(P)
&\leq  \limsup_{k \to \infty} \dfrac{h^0 \big( C,\mathcal{O}_{C} 
(\lfloor kqP \rfloor - (kp-c-n+1)[x_{0}] )\big) } {kq} \\
&= \limsup_{k \to \infty} \dfrac
{\big( C \cdot (\lfloor kqP \rfloor - (kp-c-n+1)[x_{0}]) \big)}{kq} \\
&\leq (C\cdot P) -\dfrac{p}{q} < (C \cdot P).
\end{align*}
\end{proof}

In the proof of Lemma \ref{ estimate}, we have already proved the following corollary.
\begin{cor}\label{ estimateco}
Let $C$ be a smooth curve on $X$. 
Assume that $C$ is not contained in $\mathbb{B}_{+}(D)$.
Then we have 
\begin{equation*}
\V{C}(D) \leq (C\cdot D) - \sum_{x \in C \cap \mathbb{B}_{-}(D)} \nu(T_{\mathrm{min}},x).
\end{equation*}
\end{cor}

\section{The analytic description of the restricted volume \\with positive curvature currents.}
\subsection{Proof of Theorem \ref{ main2}}
The main aim of this subsection is to prove Theorem \ref{ main2}.
Before the proof of Theorem \ref{ main2}, we need to show that the integral 
in Theorem \ref{ main2} is always finite.

\begin{prop}\label{ finite}
Let $W$ be an irreducible analytic subset 
of dimension $d$ on a compact K\"ahler manifold $M$ 
and $T$ a positive $d$-closed $(1,1)$-current on $M$.
Assume that the polar set of a potential function of $T$ does not contain $W$.
Then the integral $\int_{W_{\reg}}({T\vert_{W_{\reg} } })_{\ac}^d$ is finite.
\end{prop} 

\begin{proof}
In \cite[Lemma 2.11]{Bou02}, it has been proved that 
$\int_{W}S_{\ac}^d$ is finite for a positive $d$-closed current $S$ on $W$ 
when $W$ is non-singular. 
Since $T$ is a positive $d$-closed current on $M$, 
the restriction $T\vert_{W_{\reg}}$ is also a positive $d$-closed current. 
Thus, Proposition \ref{ finite} holds when $W$ is non-singular. 
It is enough to consider the case when $W$ has singularities. 
Then we take an embedded resolution $\mu :\widetilde {W} \subseteq  \widetilde {M}  \longrightarrow W \subseteq M$ of $W \subseteq M$. 
That is, $\mu$ is a modification from a compact complex manifold 
$\widetilde{M}$ to $M$ and 
its restriction to $W$ gives a resolution of singularities of $W$. 
Then the following lemma assures that Proposition \ref{ finite} holds 
even if $W$ has singularities.
In fact, we have 
$$\int_{\widetilde{W} } { \big(  (\mu^{*} T)\vert_{\widetilde{W}} \big)_{\ac}^{d} } 
= \int_{W_{\reg}}{({T\vert_{W_{\reg}}})_{\ac}^d}$$ 
by this lemma. 
The left hand is finite since $\widetilde{W}$ is non-singular. 
Thus the right hand is also finite. 
\end{proof}

\begin{lemm}\label{ bir}
Let $\mu :\widetilde {W} \subseteq  \widetilde {M}  \longrightarrow W \subseteq M$ be 
an embedded resolution of $W \subseteq M$. 
In the assumption of Proposition \ref{ finite}, we have 

\[ \ \ \ \ \int_{W_{\reg}}{({T\vert_{W_{\reg}}})_{\ac}^d} = 
\int_{\widetilde{W} } { \big(  (\mu^{*} T)\vert_{\widetilde{W}} \big)_{\ac}^{d} }.\]
\end{lemm}

\begin{proof}
The map $\widetilde {W}\xrightarrow{\ \ \mu \ \ } W$ is a modification. 
Therefore $(\mu ^{*}T) \vert _{\widetilde{W}}$ is identified with $T\vert_{W_{\reg}}$  by $\mu$ on some Zariski open set. 
Now 
$\big( (\mu ^{*}T) \vert _{\widetilde{W}} \big)_{\ac}$ and $\big( T\vert_{W_{\reg}} \big)_{\ac}$
are $(1,1)$-forms with $L^{1}$-functions as coefficients. 
Since a Zariski closed set is of measure zero with respect to the Lebesgue measure,  
we obtain $\int_{W_{\reg}}{({T\vert_{W_{\reg}}})_{\ac}^d} = 
\int_{\widetilde{W} } { \big(  (\mu^{*} T)\vert_{\widetilde{W}} \big)_{\ac}^{d} }$ .

\end{proof}
The rest of this subsection is devoted to prove Theorem \ref{ main2}. 
\hspace{-6mm}\\
\textit{Proof of Theorem \ref{ main2}.})\\
{\bf{ (Step1)} } In this step, we prove the inequality $\geq$ in Theorem \ref{ main2}
by using the singular holomorphic Morse inequalities (see [Bon98]) and Proposition \ref{ min}.
Proposition \ref{ min} is proved at the end of this subsection.
 Let $T$ be a positive $d$-closed $(1,1)$-current 
with analytic singularities in the first Chern class $c_1(D)$ 
whose singular locus does not contain $V$.

First, we consider the case when $V$ is non-singular.
Then we obtain the following inequality: 
\begin{align*}
\vol (D) &= \limsup_{k \to \infty} \frac{h^{0} \bigl(V,\mathcal{O}_{V} (kD) \otimes \mathcal{I} (kT_{\mathrm{min}})\vert_{V} \bigr)}{{ k^d}/{d!} } \\
& \geq \limsup_{k \to \infty} \frac{h^{0} \bigl(V,\mathcal{O}_{V} (kD) \otimes \mathcal{I} (kT)\vert_{V} \bigr)}{{ k^d}/{d!} } \\
& \geq \limsup_{k \to \infty} \frac{h^{0} \bigl(V,\mathcal{O}_{V} (kD) \otimes \mathcal{I} (kT\vert_{V})\bigr)}{{ k^d}/{d!} }.
\end{align*}

Here $T_{\min}$ denotes a current with minimal singularities in $c_{1}(D)$.
The first equality follows from Proposition \ref{ min} and the second inequality follows from the restriction formula (see \cite[Corollary 1.3]{DEL00} for the restriction formula).
By using the singular holomorphic Morse inequality, we have 
\begin{align*}
\vol (D) &\geq \limsup_{k \to \infty} \frac{h^{0} \bigl(V,\mathcal{O}_{V} (kD) \otimes \mathcal{I} (kT\vert_{V})\bigr)}{{ k^d}/{d!} }\\
& \geq \int_{V}{{(T\vert_{V})}_{\ac}^d}.
\end{align*}
Therefore the inequality $\geq$ in Theorem \ref{ main2} holds  
when $V$ is non-singular.

Now we consider the case when $V$ has singularities.
Then we take an embedded resolution 
$\mu: \widetilde V \subseteq \widetilde X \longrightarrow V\subseteq X$.
The augmented base locus of the pull back $\mu ^{*}D$ of $D$ 
does not contain $\widetilde V$, 
since  $\mu : \widetilde V \longrightarrow V$ is a modification.
By applying the singular holomorphic Morse inequality and 
restriction formula to $\mu ^{*}D$, $\widetilde V$ and $\mu ^{*}T$ again, 
we obtain
\begin{equation*} 
\voll (\mu ^{*}D) \geq \int_{\widetilde V} { ((\mu ^{*}T)\mid _{\widetilde V})_{\ac}^{d}}.
\end{equation*}
By Lemma \ref{ bir},  we obtain 
$\int_{V} \T=\int_{\widetilde{V}} \Tm$. 
On the other hand, we have $\vol (D)=\voll (\mu ^{*}D)$ from \cite[Lemma 6.7]{ELMNP09}. 
Thus the inequality $\geq$ in Theorem \ref{ main2} holds even if $V$ has singularities.
\hspace{-6mm}\\

{\bf(Step2)}\ \ In this step, 
we shall show the converse inequality $\leq$ 
by applying Fujita's approximation theorem 
for the restricted volume (which is proved in \cite[Proposition 2.11]{ELMNP09}). 
By applying \cite[Proposition 2.11]{ELMNP09}, 
for an arbitrary number $\e >0$,  
we can find a modification $\pi_{\e}:X_{\e} \longrightarrow X$
and the expression ${\pi_{\e}}^{*} D=A_{\e}+E_{\e}$ 
such that $({A_{\e}}^{d}\cdot V_{\e}) \geq \vol(D) - \e$. 
Here $A_{\e}$ is an ample $\QQ$-divisor and $E_{\e}$ is an effective $\QQ$-divisor 
whose support does not contain 
the strict transformation $V_{\e}$ of $V$.

Let $\omega _{\e}$ be 
a K\"ahler form on $X_{\e}$ in the first Chern class of $A_{\e}$.
Since the support of $E_{\e}$ does not contain $V_{\e}$, 
we may restrict  $[E_{\e}]$ to $V_{\e}$, 
where $[E_{\e}]$ denotes the current defined by the effective divisor $E_{\e}$. 
Then we obtain
\begin{align*}
({A_{\e}}^{d} \cdot {V}_{\e}) &= \int_{V_{\e}}{({\omega _{\e}}\vert_{V_{\e}})^{d}} \\
&= \int_{V_{\e}}{ \big( { (\omega _{\e} + [E_{\e}])\vert_{V_{\e} } } \big) _{\ac}^{d}} \\
&= \int_{V_{\reg}} { \big\{ \big ({\pi _{\e}}_{*}(\omega _{\e} + [E_{\e}])\big)\vert_{V_{\reg} } \big\} _{\ac}^{d} } . \\
\end{align*}
The third equality follows from the same argument as the proof of Lemma \ref{ bir}.

Since $\pi_{\e}$ is a modification, 
its push-forward ${\pi _{\e}}_{*}(\omega _{\e} + [E_{\e}])$ is 
a positive current in the Chern class $c_{1}(D)$. 
However the push-forward may not have analytic singularities.
For the proof, we need to 
approximate the push-forward by positive currents with analytic singularities. 
When we approximate a given current by Theorem \ref{ app-bou02}, 
the approximation sequence may lose positivity. 
Now $(\omega _{\e} + [E_{\e}])$ is a K\"ahler current but 
the push-forward may not be a K\"ahler current.
For that reason, we need to consider a K\"ahler current 
before we apply Theorem \ref{ app-bou02}.

For simplicity, we put $T_{\e} := {\pi _{\e}}_{*}(\omega _{\e} + [E_{\e}])$. 
Since $V$ is not contained in the augmented base locus $ {\mathbb{B}}_{+}(D)$, 
there is a K\"ahler current $S$ in $c_{1}(D)$ 
with analytic singularities whose singular locus does not contain $V$.  
By Fatou's Lemma, we obtain

\begin{align*}
\vol(D) - \e & \leq  ({A_{\e}}^{d}\cdot V_{\e}) \\
&\leq \int_{\Vr} \liminf _{\del \to 0} \big\{ (1-\del)({T_{\e}}\vert_{\Vr}) + \del (S\vert_{V_{\reg}})  \big\}_{\ac}^{d} \\
&\leq \liminf _{\del \to 0} \int_{\Vr} \big\{ (1-\del)({T_{\e}}\vert_{\Vr}) + \del (S\vert_{V_{\reg}})  \big\}_{\ac}^{d} .
\end{align*}
Hence there exists a sufficiently small number $\del_{0} >0$ with the following 
inequality: 
\begin{equation*}
\vol(D) - 2\e \leq \int_{\Vr} \{ (1-\del_{0})({T_{\e}}\vert_{\Vr}) + \del_{0} (S\vert_{V})  \}_{\ac}^{d}.
\end{equation*}
Note that $(1-\del_{0})T_{\e} + \del_{0} S$ is a K\"ahler current in $c_{1}(D)$.
By applying the approximation theorem (Theorem \ref{ app-bou02} and Corollary \ref{ app-bou02co}) to $(1-\del_{0})T_{\e} + \del_{0} S$, 
we can find positive currents $ U_{k} $ in $c_1(D)$ with the following properties.\vspace{1mm}
\\
\ \ \ (1)\ \ \ $U_{k}$ has analytic singularities for every integer $k$. \\
\ \ \ (2)\ \ \ $({U_{k}}\vert_{\Vr})_{\ac} \longrightarrow 
\big\{ (1-\del_{0})T_{\e} \vert_{\Vr}+ \del_{0} S\vert_{\Vr} \big\}_{\ac}$\ \ \ 
 almost everywhere on $V_{\reg}$  \\
\ \ \ (3)\ \ \ $U_{k}$ is a positive current for a sufficiently large $k$. 
\vspace{1mm}

Fatou's Lemma and Property (2) imply  
\begin{align*}
\vol(D) - 2\e &\leq  \int_{\Vr} \big\{ (1-\del_{0})({T_{\e}}\vert_{\Vr})_{\ac} + \del_{0} (S\vert_{V_{\reg}})_{\ac}  \big\}^{d} \\
&= \int_{\Vr} \liminf_{k \to \infty}({U_{k}}\vert_{\Vr})_{\ac}^{d}\\
&\leq   \liminf_{k \to \infty} \int_{\Vr} ({U_{k}}\vert_{\Vr})_{\ac}^{d}.\\
\end{align*}
Therefore we have $$\vol(D)-3\e\leq \int_{\Vr} ({U_{k_{0}}}\vert_{\Vr})_{\ac}^{d}$$ for a sufficiently large $k_{0}$.
Now $\e$ is an arbitrary positive number and $U_{k_{0}}$ is a positive current with analytic singularities 
in the Chern class $c_1(D)$.
It completes the proof of Theorem \ref{ main2}. \begin{flushright}
$\square$
\end{flushright}
At the end of this subsection, we prove the following proposition, which is a variation of \cite[Theorem 2.13]{ELMNP09}.

\begin{prop}\label{ min}
Let $V$ be an irreducible subvariety of dimension $d$ on $X$. 
Assume that $V$ is not contained in $ {\mathbb{B}}_{+}(D)$. 
Then the following equality holds. 

\begin{equation*}
\vol (D) = \limsup_{k \to \infty} \frac{h^{0}\bigl(V,\mathcal{O}_{V} (kD) \otimes \mathcal{I} 
(kT_{\mathrm{min}})|_{V} \bigr)}{{ k^d}/{d!} } ,
\end{equation*}
where $T_{\mathrm{min}}$ is a current with minimal singularities in $c_1(D)$.
\end{prop}

\begin{proof}
By \cite[Theorem 2.13]{ELMNP09}, we know 
\begin{equation*}
\vol (D) = \limsup_{k \to \infty} \frac{h^{0}\bigl(V,\mathcal{O} 
(kD) \otimes \mathcal{J} ( \Vert kD \Vert)|_{V} \bigr)}{{ k^d}/{d!} } .
\end{equation*}
In order to prove Proposition \ref{ min}, we should compare the multiplier ideal sheaf 
$\mathcal{I}(kT_{\mathrm{min}})$ with the asymptotic multiplier ideal sheaf $\mathcal{J}(\Vert kD\Vert)$.
By the definition of a current with minimal singularities, we have  
$\mathcal{J}(\Vert kD\Vert)  \subseteq \mathcal{I}(kT _{\mathrm{min}}) $ 
for all positive integer $k$.
Hence it is sufficient to prove the inequality $ \geq $ in Proposition \ref{ min}.
For this purpose, we establish the following lemma.
\begin{lemm}\label{ compare}
Let $D$ be a big divisor on a smooth projective variety $X$. 
There is an effective divisor $E$ (independent of $k$) with the following properties:

\ \ \ \ $\mathrm{(i)}$\ \  \ \ \ The support of $E$ does not contain V.

\ \ \ \ $\mathrm{(ii)}$\ \ \ \ \ $\mathcal{I}(kT_{\mathrm{min}})\cdot \mathcal{O}_{X}(-E) 
\subseteq \mathcal{J}(\Vert kD\Vert )$
 for a sufficiently large $k$.
\end{lemm}

\begin{proof}
This proof is essentially based on the argument in \cite[Theorem 1.11]{DEL00}.
Fix a very ample divisor $A$ on $X$.
For an arbitrary point $x \in X$, there exists a zero-dimensional complete intersection $P_{x}$ 
of the complete linear system $|A|$ containing $x$. 
The Ohsawa-Takegoshi-Manivel $L^2$-extension theorem asserts that, 
there exists an ample divisor $B$ (which depends only on $A$) such that 
for any divisor $F$ and a singular hermitian metric $h$ on $F$ 
with the positive curvature current $T_{h}$,   
the following restriction map is surjective (see \cite{OT87}, \cite{Man93}):

\begin{equation*}
H^{0}\bigl(X,\mathcal{O}_{X} (F+B) \otimes \mathcal{I}(T_{h} )\bigl) 
\longrightarrow H^{0}\bigl(P_{x},\mathcal{O}_{P_{x}} (F+B)\otimes \mathcal{I}(T_{h} \vert_{P_{x}})\bigl )
\end{equation*}
Moreover, the Ohsawa-Takegoshi-Manivel $L^2$-extension theorem claims that,  
for a section on $P_{x}$, the extended section satisfies an $L^{2}$-estimate 
with a constant which is independent of $F$.
Further the $L^{2}$-estimate depends only on $A$.

Since $D$ is big and $V$ is not contained in the augmented base locus of $D$, 
we can take $E \in \big| k_{0}D - B \big|$ with property (i) by choosing a sufficiently large $k_{0}$.
We apply the Ohsawa-Takegoshi-Manivel $L^{2}$-extension theorem to 
$F_{k}:=(k-k_0)D + E$ equipped with a singular hermitian metric $h_{\mathrm{min}}^{\otimes k-k_{0}}\otimes h_{E}$.
Here $h_{\mathrm{min}}$ denotes a singular hermitian metric with minimal singularities 
and $h_{E}$ denotes a singular hermitian metric defined by the standard section of the effective divisor $E$.
Then for a sufficiently large $k$ and a given point $x\in X$, 
we obtain the global section $s_{x}$ of  
$F_{k}+B \sim kD$ with the following estimates:
 
\begin{equation*}
 \int_{X}{ |  s_{x} {|^{2}}_{{h_{\mathrm{min}}^{\otimes k-k_{0}}\otimes h_{E}\otimes h_{B} } } \omega^{n} \leq C\ \ \ \ \ \ and\ \ \ \ \  
| s_{x}(x) |^{2}}_{h_{\mathrm{min}}^{\otimes k-k_{0}}\otimes h_{E}\otimes h_{B} }  =1,
 \end{equation*}
where $C$ is a constant depending only $A$ and $h_{B}$ is a smooth hermitian metric on $B$ with the positive curvature. 
Here $\omega$ is a K\"ahler form on $X$. 
From the second equality, we obtain 
\begin{equation*}
|s_{x}(x)|^{2}e^{-2(k-k_{0}) \varphi _{\mathrm{min}} -2\varphi _{E}-2\varphi _{B} }=1,
\end{equation*} 
where $\varphi _{\mathrm{min}}$, $\varphi _{E}$, $\varphi _{B}$ is the weight of the hermitian metric 
$h_{\mathrm{min}}$,$h_{E}$,$h_{B}$ respectively.
Since $\varphi _{B}$ is a smooth function and $X$ is compact, there is a 
constant $C^{'}$ such that 
 \begin{equation*}
 \varphi _{\mathrm{min}}+\frac{1}{k-k_0}\varphi _{E} \leq \frac{1}{k-k_{0}} \log|s_{x}(x)| +C^{'}.
 \end{equation*}

The evaluation map 
$H^{0}\bigl(X,\mathcal{O}_{X} (kD))\longrightarrow \CC$ is 
a bounded operator on the Hilbert space 
$H^{0}\bigl(X,\mathcal{O}_{X} (kD))$ with the $L^{2}$-norm. 
Moreover the operation norm is equal to the Bergman kernel
 $${\sum_{j=1}^{N_{k}} 
 {|f_{j}}(x)|_{h_{\mathrm{min}}^{\otimes k-k_{0}}\otimes h_{E}\otimes h_{B} }  } 
 $$ 
where $\{f_{j}\}_{j=1}^{N_{k}}$ 
is an orthonormal basis of $H^{0}\bigl(X,\mathcal{O}_{X} (kD))$.
Therefore there is a constant $C^{''}$ such that 
\begin{equation*}
 \log|s_{x}(x)| \leq 
 \log\sum_{j=1}^{N_{k}}{|f_{j}}| +C^{''}. 
\end{equation*}
These inequalities imply that the function $\frac{1}{k-k_{0}} \log\sum_{j}{|f_{j}}|$ has 
less singularities than $\varphi _{\mathrm{min}}+\frac{1}{k-k_0}\varphi _{E} $.
By the definition of the asymptotic multiplier ideal sheaf,  we obtain property (ii). 
\end{proof}

We shall complete the proof of Proposition \ref{ min} by using Lemma \ref{ compare}.
From property (i) in the previous lemma, we may consider the following short exact sequence:
\begin{equation*}
0 \longrightarrow  \mathcal{O}_{V}(kD-E)     \longrightarrow    \mathcal{O}_{V}(kD)  \longrightarrow
     \mathcal{O}_{V\cap E}(kD)     \longrightarrow	0 .
\end{equation*}
Since the dimension of the intersection $V\cap E$ is smaller than $\dim{V}=d$, 
we have
\begin{equation*}
\limsup_{k \to \infty} \dfrac{h^{0} 
\big(V \cap E,\mathcal{O}_{V \cap E} (kD) \big)}{k^{d}/d!} =0.
\end{equation*}
Hence we obtain 
\begin{equation*}
\limsup_{k \to \infty} \frac{h^{0} \bigl(V,\mathcal{O}_{V} (kD) 
\otimes \mathcal{I} (kT_{\mathrm{min}})|_{V} \bigr)}{{ k^d}/{d!} } \leq
  \limsup_{k \to \infty} \frac{h^{0} \bigl(V,\mathcal{O}_{V} (kD-E) 
\otimes \mathcal{I} (kT_{\mathrm{min}})|_{V}  \bigr)}{{ k^d}/{d!} } .
\end{equation*}
On the other hand, $E$ satisfies property (ii) in Lemma \ref{ compare}. 
It implies 
 \begin{equation*}
\limsup_{k \to \infty} \frac{h^{0} \bigl(V,\mathcal{O}_{V} (kD-E) \otimes \mathcal{I}
 (kT_{\mathrm{min}})|_{V}  \bigr)}{{ k^d}/{d!} } \leq  
\limsup_{k \to \infty} \frac{h^{0} \bigl(V,\mathcal{O}_{V} (kD) \otimes \mathcal{J}
(\Vert kD \Vert )|_{V} \bigr)}{{ k^d}/{d!} }.
\end{equation*}
These inequalities assert 
\begin{equation*}
 \limsup_{k \to \infty} \frac{h^{0} \bigl(V,\mathcal{O}_{V} (kD) 
\otimes \mathcal{I} (kT_{\mathrm{min}})|_{V} \bigr)}{{ k^d}/{d!} }  
\leq \vol (D) .
\end{equation*}
\end{proof}

\subsection{Properties of the restricted volume.}
Theorem \ref{ main2} enables us to define 
the restricted volume for a big class on 
a compact K\"ahler manifold (see Definition \ref{ def}). 
In this subsection, we study the properties of the restricted volume of a class on a 
compact K\"ahler manifold.
Throughout this subsection, 
we denote by $M$ a compact K\"ahler manifold and by $W$ an irreducible analytic subset 
on $M$ of dimension $d$ and 
by $\A$ a big class in $H^{1,1}(M,\RR)$.

\begin{prop}\label{ nef}
Assume that $\A$ is a nef class and $W$ is not contained in the non-K\"ahler locus $E_{nK}(\A)$ of $\A$.
Then the restricted volume $\Vm{W}(\A)$ is equal to 
the self-intersection number  $(\A^{d}\cdot W)$ on $W$.
\end{prop}

\begin{proof}
When $W$ is non-singular, this proposition is proved by using the same argument as \cite[Theorem 4.1]{Bou02}. 
By using Lemma \ref{ bir}, we can give  
the proof even if $W$ has singularities. 
 \end{proof}

The following proposition is the generalization of 
Proposition \ref{ com-posi} 
to a class on a compact K\"ahler manifold. 
The proof gives another proof of Proposition \ref{ com-posi} 
without the approximation of the positive part $P$ by $\QQ$-divisors.

\begin{prop}\label{ div}
Let $\A = P + \{N\}$ be the divisorial Zariski decomposition of $\A$.
Assume that $W$ is not contained in $E_{nK}(\A)$. 
Then $W$ is not contained in $E_{nK}(P)$ and 
the equality $\Vm{W}(\A) = \Vm{W}(P) $ holds.
\end{prop}

\begin{proof}
The proposition is based on the following fact. 
Positive currents in $\A$ and positive currents in $P$ are identified  
by the correspondence $T \longmapsto   T-[N]$.
First we show the following claim. 
\begin{claim}\label{ aug}
We have $E_{nK}(\A) = E_{nK}(P)$.
\end{claim}
\begin{proof}
For a point $x \not\in E_{nK}(\A) $, there is a K\"ahler current $T$ in $\A$ with analytic singularities 
such that $T$ is smooth at $x$. 
Note $T-[N]$ is a K\"ahler current since $T$ is a K\"ahler current in $\A$. 
In fact,  $T \geq  \omega $ for some K\"ahler form $\omega$. 
Then the negative part of the Siu decomposition of $T - \omega$ still contains $[N]$. 
It yields $T -[N] \geq  \omega$ . 
Therefore $T-[N]$ is a K\"ahler current in $P$. 
We can easily see that the support of $N$ is contained in $E_{nK}(\A)$. 
Since $x$ is not contained in the support of $N$, the K\"ahler current $T-[N]$ is smooth at $x$.
Thus $x$ is not contained in $E_{nK}(P)$.

Conversely we take a point $x \not\in E_{nK}(P)$. 
Then there is a K\"ahler current $S$ in $P$ such that $S$ is smooth at $x$ .
We may assume that $S\geq \omega $.
We shall show that $x$ is not contained in the support of $N$.
To prove this, we consider the surjective map:
\begin{equation*}
\big\{ \mathrm{smooth\ real\ \textit{d}\textit{-}closed\ (1,1)\textit{-}form} \big\} \longrightarrow  H^{1,1} (M,\mathbb{R}),
\ \ \theta  \mapsto \big\{ \theta \big\}.
\end{equation*}
We regard the space of smooth real $d$-closed $(1,1)$-forms 
as the topological space 
with the Fr\'echet topology. 
For a smooth $(n-1,n-1)$-form $\gamma $, the integral 
$\int_{M}{\theta _{k}\wedge \gamma } $ tends to $ \int_{M}{\theta  \wedge \gamma} $ 
if $\theta _{k}$ converges to $ \theta $ in the Fr\'echt topology.
Hence it follows that 
the above map $\theta  \longmapsto \big\{ \theta \big\}$ is continuous 
from the duality theorem.
Thus the  map is an open map from the open mapping theorem .

Since the map is an open map, for a positive number $\e$, 
there is a sufficiently small $\delta >0$ such that 
$\delta c_{1}(N)$ contains a smooth form $\eta $ with $-\e \omega \leq  \eta \leq \e\omega $. 
Since $S$ is a K\"ahler current, 
$S+\eta + (1-\delta)[N]$ is still a positive current for a sufficiently small $\e$. 
Further the current belongs to the class $\A$. 
Now the Lelong number of $S+\eta + (1-\delta)[N]$ at $x$ is 
equal to the Lelong number of $(1-\delta)[N]$ since $S$ and $\eta $ are 
smooth at $x$.
If $\nu([N], x)$ is positive, it is a 
contradiction to the construction of $N$. 
(Recall $N=\sum \nu(T_{\min}, E)E$, where $T_{\min}$ is a current with 
minimal singularities.)
Thus $x$ is not contained in the support of $N$.
It implies that the K\"ahler current $S+[N]$ is smooth at $x$.
Hence $x$ is not contained in $E_{nK}(\A)$.
\end{proof}

Finally, we prove $\Vm{W}(\A) = \Vm{W}(P) $. 
Note that we can define the restricted volume of $P$ thanks to the claim above. 
Since the support of the current $ [N] \vert_{W_{\reg}} $ is contained in $N\cap W$, 
the absolutely continuous part of $[N] \vert_{W_{\reg}}$ is zero. 
It implies that $ [N] \vert_{W_{\reg}} $ does not affect the integration on $W$.
Therefore it follows Proposition \ref{ div} from the correspondence 
between positive currents in $\A$ and in $P$. 
\end{proof}
The following theorem says that 
Fujita's approximation theorem for the restricted volume of 
a class holds.
It leads to the continuity of the restricted volume.

\begin{theo}\label{ fujita}

The restricted volume of a class $\A$ along $W$ can be approximated 
by self-intersection numbers of semi-positive classes.
That is, the following equality holds.
\begin{equation*}
\Vm{W} (\A) = \sup_{\pi^{*}T =B +[E]} \big( \{B \} ^{d} \cdot \widetilde{W} \big), 
\end{equation*}
where the supremum is taken over all resolutions 
$\pi : \widetilde{M}\longrightarrow M$ of positive currents
 $T \in \A$ with analytic singularities such that $\pi$ is an isomorphism at a generic point of $W$
and $\widetilde{W} \not\subseteq \mathrm{Supp}(E)$.
(Here $\widetilde{W}$ denotes the strict transformation of $W$.)
\end{theo}

\begin{proof}
Let $T$ be a positive current with analytic singularities in the class $\A$ 
whose singular locus does not contain $W$.
Then we take a modification $\mu$ such that $\mu^{*}T = B + [E]$ 
and $\mu$ is an isomorphism at a generic point on $W$. 
Lemma \ref{ bir} yields 
\begin{align*}
\int_{W_{\reg}}{T\vert_{W\reg}} &= \int_{\widetilde{W}}{ (\mu^{*}T \vert_{\widetilde{W}})_{\ac}^{d}} \\
&= \int_{\widetilde{W}}{\big( (B + [E]) \vert_{\widetilde{W}} \big)_{\ac}^{d}} \\
&= \int_{\widetilde{W}}{ B ^{d}} = \big( \{B \} ^{d} \cdot \widetilde{W} \big).
\end{align*}
Therefore we obtain $\Vm{W} (\A) =\sup_{\pi^{*}T =B +[E]} 
\big(\{B\} ^{d} \cdot \widetilde{W} \big)$ 
from the definition of the restricted volume of $\A$ along $W$.
\end{proof}
In order to show the continuity of the restricted volume, 
we consider the \lq\lq domain" of the restricted volume 
for a given analytic subset $W$ on $M$. 
Further, we prove the convexity of the domain and log concavity 
of the restricted volume.

\begin{defi}
For an irreducible analytic subset $W$ on $M$, 
\textit{the domain of the restricted volume} is defined to be 
$\mathrm{Big}^{W}(M) := \big\{ \B  \in H^{1,1} (M,\mathbb{R}) \ \big|\ W \not\subseteq  E_{nK}(\B) \big\}$.
\end{defi}

\begin{prop}\label{ concave}

$(1)$\ \ $\mathrm{Big}^{W}(M)$ is an open convex set in $H^{1,1} (M,\mathbb{R})$. \\
$(2)$\ \ For $\B_{1}, \B_{2} \in \mathrm{Big}^{W}(M)$, we have 
$$ \Vm{W}(\B_{1}+\B_{2})^{1/d} \geq \Vm{W}(\B_{1})^{1/d} + \Vm{W}(\B_{2})^{1/d}.$$ 

\end{prop}
\begin{proof}
(1)\ \ 
The convexity is easily proved from $E_{nK}(\B + \B^{'}) \subseteq 
E_{nK}(\B^{'}) \bigcup E_{nK}(\B)$ 
and $E_{nK}(\B) = E_{nK}(k\B)$ for $k > 0$.
For a given class $\B$, we can see 
$E_{nK}(\B^{'}) \subseteq E_{nK}(\B)$ for every class $\B^{'}$ in a suitable open neighborhood of $\B$ 
by using the argument in Lemma \ref{ aug}.
It asserts the domain is an open set.

$(2)$\ \ In his paper \cite{Bou02}, 
Boucksom showed the log concavity for the volume of a transcendental class.
Hence it follows the log concavity of the restricted volumes of nef classes 
from Proposition \ref{ nef}. 
By using Proposition \ref{ fujita}, we can conclude that 
the restricted volume has the log concavity on $\mathrm{Big}^{W}(M)$.
\end{proof}

\begin{cor}\label{ conti}
The following map is continuous. 
\begin{align*}
 \Vm{W} (\cdot) : \mathrm{Big}^{W}(M)  \longrightarrow  \mathbb{R}, \ \ \ \ \  
 \B  \longmapsto   \Vm{W}(\B)
\end{align*}
\end{cor}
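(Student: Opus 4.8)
The plan is to derive continuity of $\volm$ on $\mathrm{Big}^{V}(M)$ from the three structural facts just established: that $\mathrm{Big}^{V}(M)$ is an open convex cone (Proposition \ref{concave}(1)), that $\volm$ satisfies the superadditivity-in-$d$-th-roots inequality (Proposition \ref{concave}(2)), and that $\volm$ is homogeneous of degree $d$, i.e.\ $\volm(t\B) = t^{d}\volm(\B)$ for $t > 0$, which follows immediately from the definition by rescaling currents. The standard principle is that a nonnegative function on an open convex cone that is homogeneous of degree $d$ and whose $d$-th root is superadditive (equivalently, concave along rays and on the whole cone) is automatically continuous on the interior of the cone. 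So the whole proof is an instance of the elementary fact that a concave function on an open convex subset of a finite-dimensional vector space is continuous.

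Concretely, first I would set $f := \volm^{1/d}$ and observe that $f \geq 0$ on $\mathrm{Big}^{V}(M)$, that $f$ is positively homogeneous of degree $1$, and that $f(\B_{1}+\B_{2}) \geq f(\B_{1}) + f(\B_{2})$ by Proposition \ref{concave}(2). Combining homogeneity and superadditivity gives, for $\lambda \in [0,1]$, the concavity inequality $f(\lambda \B_{1} + (1-\lambda)\B_{2}) \geq \lambda f(\B_{1}) + (1-\lambda) f(\B_{2})$; hence $f$ is a concave function on the open convex set $\mathrm{Big}^{V}(M)$. A concave function on an open subset of $\RR^{m}$ (here $m = \dim H^{1,1}(M,\RR)$) is locally Lipschitz, in particular continuous: one bounds $f$ from above on a small simplex around a given point $\B_{0}$ by the maximum of its values at the vertices (using concavity), and bounds it from below near $\B_{0}$ using concavity together with the upper bound at a reflected point. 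Then $f = \volm^{1/d}$ is continuous, and since $x \mapsto x^{d}$ is continuous, $\volm$ is continuous.

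One technical point to handle at the outset: for concavity on rays I also need that $\volm$ does not jump at the boundary in a way that would break the ray inequality, but since we only claim continuity on the \emph{open} cone $\mathrm{Big}^{V}(M)$, the interior argument suffices and no boundary behavior is needed. I would also explicitly invoke that $\volm$ is finite-valued on $\mathrm{Big}^{V}(M)$, which is exactly the content of Proposition \ref{F} (the integral $\int_{V_{\reg}}(T\vert_{V_{\reg}})_{\ac}^{d}$ is finite for $T$ with analytic singularities not containing $V$), together with Theorem \ref{Fujita} which exhibits $\volm(\A)$ as a supremum of intersection numbers $(\{B\}^{d}\cdot\widetilde V)$, each of which is finite; so $f$ is a genuine real-valued concave function and the elementary continuity lemma applies.

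The main obstacle is not any of these steps individually — each is routine — but rather making sure the concavity inequality is correctly deduced from the stated Proposition \ref{concave}(2), since that proposition is phrased only as superadditivity $\volm(\B_{1}+\B_{2})^{1/d} \geq \volm(\B_{1})^{1/d} + \volm(\B_{2})^{1/d}$ without the scaling factors. The bridge is precisely the degree-$d$ homogeneity: apply superadditivity to the classes $\lambda\B_{1}$ and $(1-\lambda)\B_{2}$ and then pull the scalars out. I would state that computation in one line and then cite the elementary fact (concave $\Rightarrow$ continuous on the interior) as well known, so that the proof of this corollary stays short.

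\begin{proof}
Set $f := \volm^{1/d}$ on $\mathrm{Big}^{V}(M)$. By Proposition \ref{F} and Theorem \ref{Fujita} the restricted volume is finite, so $f$ is a nonnegative real-valued function. From the definition of the restricted volume of a class, rescaling currents shows that $\volm$ is homogeneous of degree $d$, that is $\volm(t\B) = t^{d}\volm(\B)$ for every $t > 0$; hence $f(t\B) = t f(\B)$. Combining this with the superadditivity $f(\B_{1}+\B_{2}) \geq f(\B_{1}) + f(\B_{2})$ of Proposition \ref{concave}(2), applied to the classes $\lambda \B_{1}$ and $(1-\lambda)\B_{2}$ for $\lambda \in [0,1]$, we obtain
\begin{equation*}
f\big(\lambda \B_{1} + (1-\lambda)\B_{2}\big) \geq f(\lambda \B_{1}) + f\big((1-\lambda)\B_{2}\big) = \lambda f(\B_{1}) + (1-\lambda) f(\B_{2}).
\end{equation*}
Thus $f$ is a concave function on the open convex domain $\mathrm{Big}^{V}(M) \subseteq H^{1,1}(M,\RR)$ (Proposition \ref{concave}(1)). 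A concave real-valued function on an open convex subset of a finite-dimensional real vector space is locally Lipschitz, in particular continuous. Therefore $f$ is continuous, and composing with the continuous map $x \mapsto x^{d}$ shows that $\volm = f^{d}$ is continuous on $\mathrm{Big}^{V}(M)$.
\end{proof}
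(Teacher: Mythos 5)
Your proof is correct and takes essentially the same route as the paper: the paper also deduces continuity from the elementary fact that a concave function on an open convex subset of a finite-dimensional vector space is continuous, citing Proposition \ref{concave}. You merely make explicit the bridge that the paper leaves implicit, namely that the superadditivity of $\volm^{1/d}$ from Proposition \ref{concave}(2) together with degree-$d$ homogeneity yields genuine concavity of $\volm^{1/d}$, which is the correct way to read the paper's one-line argument.
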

\begin{proof}
It is known fact that a concave function 
on an open convex set in $\mathbb{R}^{N}$ is continuous.
Therefore Corollary \ref{ conti} follows from Proposition \ref{ concave}.
\end{proof}

\subsection{Proof of Theorem \ref{ main3}}
In this subsection, we prove Theorem \ref{ main3} 
by using the analytic description of the restricted volume with currents. 
It gives another proof of  Theorem \ref{ main}. 
Let $\A \in H^{1,1}(X,\RR)$ be a big class on a smooth projective variety 
$X$ and 
$\A = P + \{N \}$ the divisorial Zariski decomposition. 
We have $E_{nK} (\A) = E_{nK} (P)$ by Lemma \ref{ aug}.
Hence we can consider the restricted volume of $P$ along $V$.

The strategy of the proof is essentially same as  Theorem \ref{ main}.
From Proposition \ref{ div}, 
we have $\vol(\A) =\vol(P) $ for an irreducible subvariety $V$ on $X$
such that $V\not\subseteq E_{nK}(\A)$. 
Moreover, 
Proposition \ref {nef} says that $ \vol (P) = (V\cdot P ^{d} )$ holds if $P$ is nef.
Hence when $\A$ admits a Zariski decomposition, 
the restricted volumes along any cohomologus subvarieties coincide.

Let us show that condition (3) implies condition (1).
We assume the non-nef locus $E_{nn}(P)$ is not empty for a contradiction and 
fix a very ample divisor $A$ on $X$. 
Then there are smooth curves $C$ and $C^{'}$ with the following properties:\\
\ \ \ (1)\ \ \ $C^{'}$ does not intersect with the non-nef locus $E_{nn}(P)$.\\
\ \ \ (2)\ \ \ $C$ and $C^{'}$ are not contained in the non-K\"ahler locus $E_{nK}(\A)$. \\
\ \ \ (3)\ \ \ $C$ intersects with the non-nef locus $E_{nn}(P)$ at $x_{0} \in X$.\\
\ \ \ (4)\ \ \ $C$ and $C^{'}$ are complete intersections of members of the complete linear system of $A$.\\
Then we prove the following lemma 
for a contradiction. 

\begin{lemm}\label{ estimate2} 
In the situation above, the followings hold. \\
\ \ \ \ \ \ \ $(A)$ \ \ $\V{C^{'}}(\A) = (C^{'}\cdot P), $ \ \ \ \ \ 
$(B)$ \ \ $\V{C}(\A) < (C\cdot P).$ 
\end{lemm}
\begin{proof}
From the definition of the restricted volume of $P$ and Proposition \ref {div}, 
we obtain
\begin{equation*}
 \V{C}(\A) = \V{C} (P) = \sup_{T \in P} \int_{C}{( T\vert_{C} ) _{\ac}} 
\end{equation*}
Here $T$ runs through positive currents with analytic singularities in the class $P$ 
whose singular loci do not contain $C$.
Now $T \vert_{C}$ is also a positive current with analytic singularities. 
In general, the Siu decomposition coincides with the Lebesgue decomposition 
for a $d$-closed positive current with analytic singularities. 
Therefore we have 
${ (T \vert_{C})_{\ac} = T \vert_{C} -\sum_{x \in C} \nu(T \vert_{C} , x)[x] }$. 
On the other hand, we have
\begin{equation*}
\int_{C} {T \vert_{C}} =  (C\cdot P). 
\end{equation*}
In fact, we can easily see
\begin{equation*}
\int_{C} {T \vert_{C}}  = (C\cdot P) + \int_{C} {\ddbar \varphi \vert_{C}}, 
\end{equation*}
where $\varphi$ is an $L^{1}$-function on $X$ such that 
$T  = \theta + \ddbar \varphi $. 
Here $\theta$ denotes a smooth $(1,1)$-form in $P$. 
By applying the approximation theorem (Theorem \ref{ app-dem82}) to $\varphi \vert_{C}$, we obtain smooth functions $\varphi _{k}$ on $C$ 
such that $\ddbar \varphi _{k} $ weakly converges to $\ddbar \varphi \vert_{C}$ . 
Thus $\int_{C}{\ddbar \varphi _{k}} $ tends to $ \int_{C} {\ddbar \varphi \vert_{C}}$.
On the other hand, $\int_{C}{\ddbar \varphi _{k}}$ is equal to zero for every $k$  
from Stokes's theorem. (Note that $\ddbar \varphi _{k}$ is smooth.) 

Hence we obtain 
\begin{align*}
\V{C}(\A) &= \sup_{T \in c_{1}(P)} \{ (C \cdot P) - \sum_{x \in C} \nu(T \vert_{C} , x)\}  \\
& = (C \cdot P) - \inf_{T \in P} \sum_{x \in C} \nu(T \vert_{C} , x).
\end{align*}

In general, 
the Lelong number of the restriction of a current is more than or 
equal to the Lelong number of the current.
Further, 
$\nu(T_{\mathrm{min}}, x) \leq \nu(T , x) $ holds from the definition of a current with minimal singularities. 
Therefore we obtain
\begin{equation*}
\V{C}(\A)  \leq (C \cdot P) -\sum_{x \in C} \nu(T_{\mathrm{min}} , x). 
\end{equation*}
The curve $C$ intersects with the non-nef locus $E_{nn}(P)$ at $x_{0}$ from property (3). 
Hence $\nu(T_{\mathrm{min}} , x_{0})$ is positive.
It implies $\V{C}(\A)  \leq (C \cdot P) - \nu(T_{\mathrm{min}} , x_{0}) < (C \cdot P) $.
Here $T_{\min}$ is a current with minimal singularities in $P$. 
Therefore inequality (B) holds. 

Finally we shall prove equality (A).
By the first half argument, we have $\V{C^{'}}(\A) \leq (C^{'} \cdot P) $.
To prove the converse inequality,  
we take a K\"ahler current $S $ with analytic singularities in $\A$.
We may assume $S \geq \omega$, where $\omega$ is a K\"ahler form on $X$. 
By applying the approximation theorem (Theorem \ref{ app-bou02}) to a current 
$T_{\min}$ with minimal singularities in $P$, 
we obtain positive currents $T_{k}$ with analytic singularities with the following properties.
\\
\ \ \ $\mathrm{(b')}$\ \ \ $T_{k} \geq -\e _{k} \omega$ and $\e_{k}$ converges to zero.\\
\ \ \ $\mathrm{(c')}$\ \ \ The Lelong number $\nu(T_{k},x)$ increases to
$\nu(T_{\mathrm{min}} , x)$ for every point $x \in X$.\\
For every positive number $\delta $, there is $k(\delta )$ such that 
$(1-\delta ) T_{k(\delta)} + \delta S$ is a positive current. 
Since  $(1-\delta ) T_{k(\delta)} + \delta S$ is 
a positive current with analytic singularities, 
the inequality 
\begin{equation*}
\V{C^{'}} (\A) \geq \int_{C^{'}} \big( ((1-\delta ) T_{k(\delta)} + \delta S )\vert_{C^{'}} \big)_{\ac}
\end{equation*}
holds by the definition of the restricted volume. 
The Lelong number of $T_{k}$ at every point in $C$
is zero  by property (3). 
It implies $T_{k}$ is smooth on $C$.
Thus we obtain 
\begin{equation*}
\V{C^{'}} (\A) \geq  
(1-\delta) (C^{'} \cdot P) - \delta 
\int_{C^{'}} \big(  S \vert_{C^{'}} \big)_{\ac}.
\end{equation*} 
for every $\delta$. 
When $\delta$ tends to zero, we obtain $\V{C^{'}} (\A) \geq (C^{'} \cdot P) $. 
\end{proof}

\end{document}